\documentclass[12pt]{amsart}
\usepackage{amssymb,amsfonts,amsmath,amsthm,hyperref,xcolor} 
\usepackage[letterpaper, portrait, margin=28mm]{geometry}
\usepackage{bbm}
\usepackage{comment}

\newcommand{\bs}{\backslash}

\newcommand{\DBSC}{\operatorname{DBSC}}

\newcommand{\N}{{\mathbb N}}

\newcommand{\cB}{{\mathcal B}}

\newcounter{dummy} \numberwithin{dummy}{section}
\newtheorem{theorem}[dummy]{Theorem}
\newtheorem{lemma}[dummy]{Lemma}

\newtheorem{definition}[dummy]{Definition}
\newtheorem{proposition}[dummy]{Proposition}

\theoremstyle{remark}
\newtheorem{remark}[dummy]{Remark}

\title{On the sequential monotone closure of $CD_{\omega}(K)$ spaces}

\author[S.~Chalana]{Sukrit Chalana}
\address{Department of Mathematics, Toronto Metropolitan University, 350 Victoria Street, Toronto, Canada M5B 2K3}
\email{schalana@torontomu.ca}

\author[D.~Leung]{Denny H.~Leung}
\address{Department of Mathematics, National University of Singapore, Singapore 117543}
\email{dennyhl@u.nus.edu}

\author[F.~Xanthos]{Foivos Xanthos}
\address{Department of Mathematics, Toronto Metropolitan University, 350 Victoria Street, Toronto, Canada M5B 2K3}
\email{foivos@torontomu.ca}

\date{\today}
\begin{document}

\begin{abstract}
In this short note, we settle a problem posed by Wickstead in ~\cite{W:24}, arising from the study of the Riesz completion of spaces of regular operators between Banach lattices. 
\end{abstract}
\keywords{Banach lattices, differences of bounded semicontinuous functions, order completions, space of regular operators.   }

\subjclass[2020]{46B42,46A40,47B65}

\maketitle

\section{Introduction}

Let $E$ be an Archimedean vector lattice and $E^{\delta}$ be an order completion of $E$. In the following, we will view $E$ as an order dense and majorizing sublattice of $E^{\delta}$. We put $u(E)$ to be the set of $\sigma$-upper elements of $E$ in $E^{\delta}$, that is $u(E):=\{x \in E^{\delta} \,\, | \,\, x_n \uparrow x \text{ for some } (x_n) \subset E\}$ and $l(E):=\{x \in E^{\delta} \,\, | \,\, x_n \downarrow x \text{ for some } (x_n) \subset E\}$ the $\sigma$-lower elements of $E$ in $E^{\delta}$. It is clear that $u(E)-u(E)$ is a subspace of $E^{\delta}$. Moreover, by applying the lattice identity $(x-y)^+=x \vee y-y$, one readily verifies that  this space
is in fact a sublattice of $E^{\delta}$. Moreover, it is easy to see that $u(E)-u(E)=u(E)\cup l(E)-u(E)\cup l(E)$. This justifies the  definition of the following space, which is unique up to lattice isomorphism.

\begin{definition} Let $E$ be an Archimedean vector lattice with order completion $E^{\delta}$. The sublattice $E^{\sigma m}:= u(E)-u(E)$ of $E^{\delta}$ is called the \emph{sequential monotone closure} of $E$.
\end{definition}

The above notion was, to the best of our knowledge, first studied in \cite{Q:75}. In the recent paper \cite{W:24}, Wickstead further investigated this space 
under the name of \emph{sequential order closure} and denoting it by $E^{\sigma}$. We believe that the notation $E^{\sigma m}$  and the term \emph{sequential monotone closure} are more appropriate (see Remark~\ref{remark}), and for this reason we adopt this terminology and notation throughout the paper. If $E$ is a Banach lattice, then $E^{\sigma m}$ admits the \emph{maximal extension norm} $||x||_{E^{\sigma m}}:=\inf\{||y|| \,\, |\,\, y \in E, y \geq |x|\}$ which extends the lattice norm on $E$ to a lattice norm on $E^{\sigma m}$. In \cite[Theorem 5.6]{W:24} it is proved that if $E$ is  an almost $\sigma$-order complete Banach lattice then $(E^{\sigma m},||\cdot||_{E^{\sigma m}})$ is complete and in \cite[Question 5.8]{W:24} it is asked whether $(E^{\sigma m},||\cdot||_{E^{\sigma m}})$ is complete for any Banach lattice $E$. This question is motivated by the study of the Riesz completion of spaces of regular operators. In this note, we answer this question to the negative by presenting a class of Banach lattices $E$ for which $E^{\sigma m}$ is not uniformly complete, and thus $(E^{\sigma m},||\cdot||_{E^{\sigma m}})$ is not a complete space.

\section{The main result}

We recall below some standard  terminology from vector lattice theory. Let $E,F$ be vector lattices. A linear operator $T:E \rightarrow F$ is said to be a \emph{lattice  homomorphism} if $T(x \vee y)=T(x)\vee T(y)$ for all $x,y \in E$. If, in addition to being a lattice homomorphism, $T$ is also bijection, we say that $T$ is a \emph{lattice  isomorphism}. In this case, we say that $E,F$ are lattice isomorphic and we write $E \cong F$. Let $Y$ be a sublattice of $E$, we say that $Y$ is \emph{majorizing} in $E$, if for every $x \in E$ there exists $y \in Y$ such that $x \leq y$, we say that $Y$ is \emph{order dense} in $E$, if for every $x \in E_+, x\neq 0$ there exists $y \in Y$ such that $0<y\leq x$.  We say that $E$ is {\em ($\sigma$-) order complete} if every nonempty (countable) order bounded above subset of $E$, has a supremum in $E$. Every Archimedean vector lattice is lattice isomorphic to an order dense and majorizing sublattice of an order complete space, this space is unique up to lattice isomorphism and we denote it by $E^{\delta}$ (see e.g. \cite[Section 32]{LZ:71}). Let $K$ be a nonempty Hausdorff topological space. We denote by $B(K)$ the space of bounded functions on $K$ and by $C_b(K)$ the space of bounded continuous functions.  We note that $B(K)$ is an order complete vector lattice under the pointwise ordering and a Banach lattice under the uniform norm $||\cdot||_{\infty}$. We denote by $\mathbbm{1}_{F}$  the indicator function of the set $F \subset K$. 
The set of differences of bounded semicontinuous functions is denoted by  $DBSC(K)$, that is 

$$\DBSC(K)=\{ f_1-f_2 \,\, | \,\, f_1,f_2 \,\, \text{semicontinuous and bounded} \,\, on \,\, K\}.$$

The following space was introduced in \cite{AW:93}. 

\[
CD_\omega(K) = \bigl\{ f \in B(K) \;\big|\; \exists g \in C_b(K) : [f \neq g] \text{ is at most countable} \bigr\}.
\]

The following result contains some basic facts about $CD_\omega(K)$ spaces. The proof is essentially  given in \cite{AW:93}, but the authors had the additional assumption that $K$ is compact and basically disconnected, for this reason we reproduce the proof here.
A set $A \subset K$ is called \emph{nowhere dense} if $\operatorname{int}(\overline{A}) = \emptyset$. A set $M \subset K$ is called \emph{meager} (or of first category) if it is a countable union of nowhere dense sets, i.e. $M = \bigcup_{n=1}^{\infty} N_n$ with each $N_n$ nowhere dense in $K$. We say that $K$ is \emph{perfect} if $K$ has no isolated 
points and a \emph{Baire space} if the complement of every meager set is dense in $K$. Finally, we say that $K$ is a \emph{Polish  space}, if it is a  separable and 
completely metrizable space. We also recall that every nonempty prefect  Polish space is uncountable (see e.g.~\cite[Theorem 6.2]{K:94}).

\begin{proposition}\label{propbasic}
Let $K$ be a perfect Baire space, then the space $CD_\omega(K)$ is an order dense, majorizing and closed sublattice of $(B(K),||\cdot||_\infty)$. Moreover, there exists a bounded linear projection $P_c: CD_\omega(K) \rightarrow C_b(K)$ that is also a lattice homomorphism  such that $[P_c(f) \neq f]$ is at most countable for all $f \in CD_\omega(K)$.
\end{proposition}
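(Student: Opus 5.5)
The plan is to first prove a uniqueness statement, and then obtain everything else from it. The key observation is this: if $g_1,g_2\in C_b(K)$ and $[g_1\neq g_2]$ is at most countable, then $g_1=g_2$. To see this, note that $U=[g_1\neq g_2]$ is open, so it suffices to show that no nonempty open set is countable. Since $K$ is Hausdorff and perfect, each singleton $\{x\}$ is closed and has empty interior, so it is nowhere dense. A countable open set $U$ is therefore meager. Being Baire, $K$ has dense complement $K\setminus U$. A nonempty open $U$ would have to meet this complement, which is impossible, so $U=\emptyset$. Consequently, for each $f\in CD_\omega(K)$ the continuous $g$ with $[f\neq g]$ countable is unique, and we define $P_c(f):=g$.

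Next I will check the algebraic and lattice structure. Let $f_i$ agree with $g_i$ off countable sets $A_i$. Then $\alpha f_1+\beta f_2$, $f_1\vee f_2$ and $f_1\wedge f_2$ agree with $\alpha g_1+\beta g_2$, $g_1\vee g_2$ and $g_1\wedge g_2$ respectively, off the countable set $A_1\cup A_2$. Hence $CD_\omega(K)$ is a sublattice of $B(K)$. By uniqueness, $P_c$ is linear and a lattice homomorphism, and $P_c g=g$ for $g\in C_b(K)$, so $P_c$ is a projection onto $C_b(K)$. For boundedness, take $x\notin A$, where $A=[f\neq P_cf]$. Then $|P_cf(x)|=|f(x)|\le\|f\|_\infty$. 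The set $K\setminus A$ is dense, since $A$ is a countable union of nowhere dense singletons and $K$ is Baire. By continuity of $P_cf$ we get $\|P_cf\|_\infty\le\|f\|_\infty$, so $\|P_c\|\le 1$.

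Majorizing is immediate: any $f\in B(K)$ satisfies $f\le \|f\|_\infty\mathbbm{1}_K$, and $\mathbbm{1}_K\in C_b(K)$. For order density, given $0<f\in B(K)$, pick $x$ with $f(x)>0$. Then $0<f(x)\mathbbm{1}_{\{x\}}\le f$, and $f(x)\mathbbm{1}_{\{x\}}\in CD_\omega(K)$ because it differs from $0$ only at one point.

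For closedness, let $f_n\in CD_\omega(K)$ with $f_n\to f$ uniformly. Then $\|P_cf_n-P_cf_m\|_\infty\le\|f_n-f_m\|_\infty$, so $(P_cf_n)$ is Cauchy in $C_b(K)$ and converges uniformly to some $g\in C_b(K)$. Off the countable set $\bigcup_n[f_n\neq P_cf_n]$ we have $f_n=P_cf_n$ for every $n$, so passing to the limit gives $f=g$ there. Thus $f\in CD_\omega(K)$. The main obstacle is the uniqueness lemma, which is the only place the perfect and Baire hypotheses enter; the boundedness estimate reuses the same density fact, and everything after that is routine.
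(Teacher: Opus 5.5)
Your proof is correct and follows essentially the same route as the paper: uniqueness of the continuous representative via the Baire property and the absence of isolated points, then linearity, the lattice homomorphism property and the projection property from uniqueness, then closedness via the uniformly Cauchy sequence $(P_c f_n)$. The only minor deviation is in the norm bound, where the paper deduces $|P_c f|\le \|f\|_\infty\mathbbm{1}_K$ directly from $|f|\le\|f\|_\infty\mathbbm{1}_K$, using that $P_c$ is a lattice homomorphism fixing $\mathbbm{1}_K$; your density-plus-continuity argument works equally well.
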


\begin{proof}
For $f_i,g_i \in B(K)$ and $\lambda,\mu \in \mathbb{R}$,
\[[\lambda f_1+\mu f_2 \neq \lambda g_1+\mu g_2], [f_1\vee f_2 \neq g_1 \vee g_2] \subset [f_1 \neq g_1] \cup [f_2 \neq g_2].\]
It follows immediately that $CD_\omega(K)$ is a sublattice of $B(K)$. 
Since $\mathbbm{1}_{\{x\}} \in CD_\omega(K)$ for all $x \in K$ and $\mathbbm{1}_{K} \in CD_\omega(K)$ we get that $CD_\omega(K)$ is an order dense and majorizing sublattice of $B(K)$.
 Let $f \in CD_\omega(K)$.
We claim that there is a unique $g\in C_b(K)$ so that $[f\neq g]$ is at most countable.
Existence of such a $g$ follows from the definition of $CD_\omega(K)$.  
Assume that $g,g'\in C_b(K)$ and that $[f\neq g], [f\neq g']$ are at most countable.
Since $K$ has no isolated points, $[f \neq g] \cup [f \neq g']$ is meager.
As $K$ is a Baire space, 
\[ A:= [f=g]\cap [f=g'] = ([f \neq g] \cup [f \neq g'])^c\]
 is dense in $K$.  Now $g, g'$ are two continuous functions that agree on the dense set $A$; thus $g=g'$.  This verifies the claim.
 
Define $P_c:CD_\omega(K)\to C_b(K)$ by $P_c(f) =g$, where $g$ is the unique element in $C_b(K)$ such that $[f\neq g]$ is at most countable.
Using the uniqueness of $g$ from the claim, it is easy to see that $P_c$ is a linear lattice homomorphism.  Clearly, $P_c$ is a projection.
If $f \in CD_\omega(K)$, then $|f| \leq ||f|| \mathbbm{1}_{K}$.  Hence $|P_c(f)| \leq ||f|| \mathbbm{1}_{K}$ since $P_c$ is a lattice homomorphism.  Therefore, $||P_c(f)|| \leq ||f||$ and so $P_c$ is a bounded operator.

Finally, we will establish that $CD_\omega(K)$ is closed in $(B(K),||\cdot||_{\infty})$. Let $(f_n)$ be a Cauchy sequence in $CD_{\omega}(K)$ and $f$ its uniform limit in $B(K)$, then since $P_c$ is bounded, 
$(P_c(f_n))$ is Cauchy in $C_b(K)$ and thus has a uniform limit $g \in C_b(K)$. Note that $[f \neq g] \subset \bigcup_{n=1}^{\infty} [P_c(f_n) \neq f_n]$ and therefore $[f \neq g]$ is at most countable.
So $f \in  CD_\omega(K)$, as required.
\end{proof}

Proposition \ref{propbasic} shows in particular that $B(K)$ is an order completion of $CD_{\omega}(K)$.  Thus $CD_\omega(K)^{\sigma m}$ can be realized as a sublattice of $B(K)$. The following 
lemma will be used to calculate $CD_\omega(K)^{\sigma m}$.

\begin{lemma}\label{mainlemma}
Let $E$ be an order dense sublattice of $B(K)$, and let $L$ be the set of all bounded functions on $K$ that can be expressed as the difference of pointwise increasing limits of sequences from $E$. If $f,g \in B(K)$ satisfy $g \in L$ and the set $[f \neq g]$ is at most countable, then $f \in L$.

\end{lemma}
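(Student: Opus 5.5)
The plan is to show that $L$ is closed under addition and that every bounded function supported on a countable set lies in $L$. Then $f = g + (f-g)$ with $g \in L$ and $f-g$ bounded and supported on the countable set $[f\neq g]$, so $f \in L$.

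\emph{Step 1: points are in $E$.} First I show that $\mathbbm{1}_{\{x\}} \in E$ for every $x \in K$. Since $E$ is order dense in $B(K)$ and $\mathbbm{1}_{\{x\}} > 0$, there is $y \in E$ with $0 < y \le \mathbbm{1}_{\{x\}}$. Such a $y$ vanishes off $\{x\}$ and is nonzero, so $y = a\mathbbm{1}_{\{x\}}$ for some $a > 0$. As $E$ is a linear subspace, $\mathbbm{1}_{\{x\}} = a^{-1}y \in E$. In particular $E$ contains every function with finite support.

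\emph{Step 2: $L$ is closed under addition.} Suppose $u_1 - v_1$ and $u_2 - v_2$ are in $L$, where $u_i = \lim_n \uparrow a^i_n$ and $v_i = \lim_n \uparrow b^i_n$ with $a^i_n, b^i_n \in E$. Then $a^1_n + a^2_n \uparrow u_1+u_2$ and $b^1_n + b^2_n \uparrow v_1+v_2$ pointwise. Hence the sum $(u_1+u_2)-(v_1+v_2)$ is again a difference of pointwise increasing limits of sequences from $E$. It is bounded, being a sum of bounded functions, so it lies in $L$.

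\emph{Step 3: countably supported functions are in $L$.} Let $h := f - g$. It is bounded and vanishes outside the at most countable set $C = [f\neq g] = \{c_1, c_2, \dots\}$; the finite case is trivial. Write $h = h^+ - h^-$. For $h^+ \ge 0$, set $a_n := \sum_{k=1}^n h^+(c_k)\mathbbm{1}_{\{c_k\}}$. By Step 1, $a_n \in E$. The sequence $(a_n)$ is pointwise increasing because $h^+ \ge 0$, and it converges pointwise to $h^+$. The same construction gives $b_n \in E$ with $b_n \uparrow h^-$. Thus $h = h^+ - h^-$ is a difference of pointwise increasing limits of sequences from $E$. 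Since $h$ is bounded, $h \in L$.

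\emph{Conclusion.} By Step 2, $f = g + h \in L$. There is no real obstacle here. The only point needing care is Step 1, extracting point indicators from order density, which works because the only positive functions below $\mathbbm{1}_{\{x\}}$ are positive multiples of it. One should also confirm that the definition of $L$ requires boundedness only of the final difference, not of the individual increasing limits; either reading is harmless, since every limit constructed above is bounded.
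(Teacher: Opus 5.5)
Your proof is correct and takes essentially the same route as the paper: split $f = g + (f-g)$, approximate the positive and negative parts of the countably supported function $f-g$ by finitely supported functions, which lie in $E$ by order density, and then use that $L$ is closed under addition. The only differences are minor: you justify explicitly why $\mathbbm{1}_{\{x\}} \in E$, which the paper just asserts, and you write $f-g=(f-g)^+-(f-g)^-$ directly as a difference of increasing limits, where the paper uses $d_n \downarrow h\wedge 0$ together with the subspace property of $L$.
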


\begin{proof}
Note first that since $E$ is order dense in $B(K)$ we have  $h\cdot \mathbbm{1}_{A} \in E$ for all finite $A \subset K$ and $h \in B(K)$. 
Let $h = f-g$.  Then $h \in B(K)$ and $[h\neq0]$ is at most countable.
Choose finite sets $F_n, G_n\subset K$, $n\in \N$, so that $(F_n)$ and $(G_n)$ increase to $\{h<0\}$ and $\{h>0\}$ respectively.
Define 
\[ d_n=h\cdot \mathbbm{1}_{F_n}, u_n=h\cdot \mathbbm{1}_{G_n}.\]
Then $(d_n)$ is a decreasing sequence and $(g_n)$ is an increasing sequence in $E$.  If $h(x) < 0$, then there exists $n_0$ such that $x\in F_n$ for all $n\geq n_0$.  Hence
\[ (h\wedge 0)(x) = h(x) = \lim d_n(x).\]
On the other hand, if $h(x) \geq 0$, then $x\notin F_n$ for all $n$ and thus 
\[ (h\wedge 0)(x) = 0 = \lim d_n(x).\]
This proves that $d_n\downarrow h\wedge0$.  Similarly, $u_n \uparrow h \vee0$.
By definition, this shows that $h\wedge 0$ and $h\vee 0$ belong to $L$.
Since $L$ is a subspace of $B(K)$, $h = h\wedge 0 + h\vee 0 \in L$.
Finally, $f = g+h \in L$ since $g,h \in L$ and $L$ is a subspace of $B(K).$
\end{proof}

Below we will establish a connection between $\DBSC(K)$ and $CD_\omega(K)^{\sigma m}$. We recall here that if $K$ is metrizable, then a function $f \in B(K)$ is lower  (upper) semicontinuous, if and only if there exists a sequence $(f_n) \subset C(K)$ such that $f_n \uparrow f$ ($f_n \downarrow f$) (see e.g. \cite[Lemma 2.41, Theorem 3.13]{AB:06}). 

\begin{theorem}\label{thm1} Let $K$ be a perfect metrizable Baire space, then
$$CD_\omega(K)^{\sigma m}= \{f \in B(K) \,\, |\,\, \exists  g \in \DBSC(K) \,\, : [f \neq g] \text{ is at most countable} \}$$
Moreover, we have $E^{ \sigma m}=CD_\omega(K)^{\sigma m}$ for any order dense sublattice $E$ of $B(K)$ such that $C_b(K) \subset E \subset CD_\omega(K)$.
\end{theorem}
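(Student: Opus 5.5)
Let $M$ denote the right-hand side set, and fix an order dense sublattice $E$ of $B(K)$ with $C_b(K)\subset E\subset CD_\omega(K)$. By Proposition~\ref{propbasic}, $B(K)$ is an order completion of $E$ (since $E$ is order dense and $\mathbbm{1}_K\in C_b(K)\subset E$ makes it majorizing). Hence $E^{\sigma m}$ is realized as the set $L_E$ of functions in $B(K)$ that are differences of two functions, each of which is an increasing pointwise limit of a sequence in $E$. Monotone suprema of countable bounded families in $B(K)$ are pointwise, so $x_n\uparrow x$ in $B(K)$ means pointwise increasing convergence. I will show $L_E=M$ for every such $E$. This gives both statements at once, since $E=CD_\omega(K)$ is an admissible choice.

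\emph{Inclusion $M\subset L_E$.} First, $\DBSC(K)\subset L_E$. Let $g=f_1-f_2$ with $f_i$ bounded semicontinuous. By the recalled fact for metrizable $K$, a bounded lower semicontinuous function is an increasing limit of continuous functions, and a bounded upper semicontinuous one is a decreasing limit of continuous functions. One point needs care: the cited result may be stated for $C(K)$. I will ensure the approximants are bounded by replacing $h_n$ with $(h_n\vee(-c))\wedge c$, where $c$ bounds the limit; monotonicity and convergence survive this truncation. Thus lower semicontinuous functions lie in $u(C_b(K))$ and upper semicontinuous ones in $l(C_b(K))$, both inside $u(E)\cup l(E)$. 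Since the paper notes $u(E)-u(E)=u(E)\cup l(E)-u(E)\cup l(E)$, we get $g\in L_E$. Now if $f\in B(K)$ and $[f\neq g]$ is countable for some $g\in\DBSC(K)$, Lemma~\ref{mainlemma} applied to $E$ yields $f\in L_E$.

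\emph{Inclusion $L_E\subset M$.} Since $E\subset CD_\omega(K)$, we have $L_E\subset L_{CD_\omega(K)}$, so it suffices to treat $E=CD_\omega(K)$. Also $M$ is a linear subspace, because countable exceptional sets are preserved under linear combinations and $\DBSC(K)$ is a subspace. So it is enough to show that if $f_n\in CD_\omega(K)$ and $f_n\uparrow f$ pointwise with $f$ bounded, then $f\in M$. Set $g_n=P_c(f_n)\in C_b(K)$ and let $N=\bigcup_n[f_n\neq g_n]$, which is countable. Since $P_c$ is a lattice homomorphism, it is positive, so $(g_n)$ is increasing. Moreover $g_n\le \|f\|_\infty\mathbbm{1}_K$, because $f_n\le f$ implies $P_c f_n\le P_c(\|f\|\mathbbm{1}_K)$. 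Hence $g:=\sup_n g_n$ (pointwise) is bounded. It is lower semicontinuous as a supremum of continuous functions, so $g\in\DBSC(K)$. Off $N$ we have $f_n=g_n$ for all $n$, so $f=g$ there, and $[f\neq g]\subset N$ is countable. Thus $f\in M$.

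The main obstacle is this second inclusion, specifically seeing that the pointwise monotone limit commutes with passing to the continuous parts $P_c(f_n)$ outside a single countable set. That works because countably many countable exceptional sets have a countable union, and $P_c$ preserves order. The remaining subtlety is the boundedness of the continuous approximants in the first inclusion, handled by the truncation above.
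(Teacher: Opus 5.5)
Your proof is correct and follows the paper's argument essentially step by step. You use $P_c$ to pass from $f_n\uparrow f$ to a lower semicontinuous limit off a single countable set. The reverse inclusion comes from semicontinuous approximation by continuous functions together with Lemma~\ref{mainlemma}. The general $E$ is handled by the sandwich $C_b(K)\subset E\subset CD_\omega(K)$. Your truncation argument only makes explicit the boundedness of the continuous approximants, which the paper leaves under ``it follows from the definitions.''
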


\begin{proof}
We put $L:=\{f \in B(K) \,\, |\,\, \exists  g \in \DBSC(K) \,\, : [f \neq g] \text{ is at most countable}\}$. By Proposition~\ref{propbasic} it follows that $B(K)$ is an order completion of $CD_\omega(K)$ and thus $CD_\omega(K)^{\sigma m}=u(CD_\omega(K))-u(CD_\omega(K))$. Let $f\in u(CD_\omega(K))$.  Then there exists $(f_n)\subset CD_\omega(K)$ such that $f_n \uparrow f$. The operator $P_c$ is positive by Proposition~\ref{propbasic} and hence  $(P_c(f_n))$ is increasing and order bounded in $C_b(K)$. Therefore $(P_c(f_n))$ increases pointwise to a  bounded lower semicontinuous function $g$. Since $[f \neq g] \subset \bigcup_{n=1}^\infty [P_c(f_n) \neq f_n]$ and $[P_c(f_n)\neq f_n]$ is at most countable for each $n$ by Proposition~\ref{propbasic},  $[f \neq g]$ is at most countable. If $h\in  u(CD_\omega(K))-u(CD_\omega(K))$,
write $h = h_1-h_2$ with $h_i \in u(CD_\omega(K))$, $i =1,2$. By the preceding argument, there are bounded lower semicontinuous functions $g_1,g_2$ so that $[h_i\neq g_i]$ is at most countable for $i=1,2$. Then $g: = g_1-g_2\in \DBSC(K)$ and $[h\neq g]\subseteq [h_1\neq g_1] \cup [h_2\neq g_2]$ is at most countable.  This proves that $ u(CD_\omega(K))-u(CD_\omega(K)) \subset L$.

Conversely, let $f\in B(K)$ be such that there exists $g \in \DBSC(K)$ such that $[f \neq g]$ is at most countable.  It follows from the definitions that  $\DBSC(K)\subset u(CD_\omega(K))- u(CD_\omega(K))$.  
Since $CD_\omega(K)$ is order dense in $B(K)$ by Proposition \ref{propbasic},  we may apply  Lemma~\ref{mainlemma} to see that $f\in  u(CD_\omega(K))-u(CD_\omega(K))$.

Finally, let $E$ be an order dense sublattice  of $B(K)$ such that $C_b(K) \subset E \subset CD_\omega(K)$. Then $B(K)$ is an order completion of $E$, and thus $E^{ \sigma m}=u(E)-u(E)$. Since $E \subset CD_\omega(K)$, we clearly have $E^{ \sigma m} \subset CD_\omega(K)^{\sigma m}$. On the other hand, $C_b(K)\subset E$ implies that $\DBSC(K) \subset u(E)-u(E)$.   
Hence 
\[ CD_\omega(K)^{\sigma m} = L \subset E^{\sigma m}\]
by Lemma~\ref{mainlemma}.
\end{proof}

We turn now our focus on the uniform limits of sequences in $\DBSC(K)$. We denote by $\cB^1_1(K)$ the closure of $\DBSC(K)$ in $(B(K),|| \cdot||_\infty)$.  Assume that there is a sequence $(F_n)^\infty_{n=0}$ of nonempty closed subsets of $K$ so that 
\begin{enumerate}
\item[(a)] $F_0=K$, $F_n \subsetneq F_{n-1}$ for any $n\in \N$.
\item[(b)] If $x\in F_n$, $n\in \N$, and $U$ is an open neighbourhood of $x$, then $U\cap(F_{n-1}\bs F_n)$ is uncountable.
\end{enumerate}
Define $f = \sum^\infty_{n=1}\frac{(-1)^n}{n}\mathbbm{1}_{F_{n-1}\bs F_n}$.  
Since $\mathbbm{1}_{F_{n-1}\bs F_n} = \mathbbm{1}_{F_{n-1}} - \mathbbm{1}_{F_n}$ is a difference of bounded upper semicontinuous functions and the series for $f$ converges uniformly on $K$, $f\in \cB^1_1(K)$.

\begin{proposition}\label{p1}
Let $K$ be a metrizable space that contains a  sequence $(F_n)^\infty_{n=0}$ of nonempty closed sets that satisfy (a) and (b). Define $f$ as above. If $g\in \DBSC(K)$, then $S:=[f\neq g]$ is uncountable.
\end{proposition}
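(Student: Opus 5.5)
The plan is to argue by contradiction: assume $S=[f\neq g]$ is at most countable, and show that $g$ would then have unbounded "variation". This is impossible for a difference of two bounded lower semicontinuous functions.

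\emph{Step 1: normal form for $g$.} Since $u$ is upper semicontinuous iff $-u$ is lower semicontinuous, every element of $\DBSC(K)$ can be written as $g=a-b$ with $a,b$ bounded lower semicontinuous. Indeed, a sum of two lower semicontinuous functions is lower semicontinuous, so all four combinations reduce to this form. Fix $M$ with $|a|,|b|\le M$ on $K$.

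\emph{Step 2: the key jump inequality.} Let $z\in K$ and $\varepsilon>0$. Lower semicontinuity gives an open neighbourhood $U$ of $z$ with $a(w)\ge a(z)-\varepsilon$ and $b(w)\ge b(z)-\varepsilon$ for all $w\in U$. If $w\in U$ then
\[ a(w)+b(w)\ \ge\ a(z)+b(z)+|g(w)-g(z)|-2\varepsilon .\]
To see this, suppose $g(w)\ge g(z)$. Then $a(w)=g(w)+b(w)\ge g(w)+b(z)-\varepsilon=a(z)+(g(w)-g(z))-\varepsilon$. Adding $b(w)\ge b(z)-\varepsilon$ gives the bound. The case $g(w)<g(z)$ is symmetric, with the gain going to $b$.

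\emph{Step 3: a descending chain through the levels.} Fix a large $N$ and set $\varepsilon=1/N^2$. Write $D_n=F_{n-1}\bs F_n$; on $D_n$ we have $f=(-1)^n/n$.

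\emph{Starting point.} Since $F_{N+1}\neq\emptyset$, applying (b) at a point of $F_{N+1}$ shows that $D_{N+1}$ is uncountable. Pick $z_{N+1}\in D_{N+1}\setminus S$; note that $z_{N+1}\in F_N$.

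\emph{Inductive step.} Having chosen $z_{n+1}\in D_{n+1}\setminus S$, we have $z_{n+1}\in F_n$. Take the neighbourhood $U$ of $z_{n+1}$ from Step 2. By (b), $U\cap D_n$ is uncountable, so we can pick $z_n\in (U\cap D_n)\setminus S$.

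Off $S$ we have $g=f$, so
\[ |g(z_n)-g(z_{n+1})|=\tfrac1n+\tfrac1{n+1}, \]
because consecutive levels carry alternating signs. Summing Step 2 from $n=N$ down to $n=1$ gives
\[ 2M\ \ge\ a(z_1)+b(z_1)\ \ge\ -2M+\sum_{n=1}^{N}\Big(\tfrac1n+\tfrac1{n+1}\Big)-\tfrac{2}{N}. \]
The harmonic sum diverges, so this fails for large $N$, which is the desired contradiction.

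The main obstacle is getting the direction of the chain right. Lower semicontinuity only controls $a,b$ on a neighbourhood of an already chosen point. Hypothesis (b) only supplies points of $D_n$ near points of $F_n$, not near points of $D_n$. The chain must therefore run downward, from the deep levels $F_N$ toward $F_0$, and the values of $a+b$ increase along the way. Countability of $S$ is used only to pick each $z_n$ outside $S$ from an uncountable set. Metrizability is not really needed for this argument.
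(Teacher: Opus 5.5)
Your proof is correct. It rests on the same mechanism as the paper's proof: argue by contradiction, write $g$ as a difference of bounded lower semicontinuous functions, use (b) to find points of $D_n\bs S$ accumulating at points of $D_{n+1}\bs S$, and let a divergent harmonic sum contradict boundedness. The bookkeeping, however, is genuinely different.

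The paper works with sequences, which is where metrizability enters, and tracks the suprema $\sup_{D_n\bs S} k$ over whole levels. Passing from $D_{2n-1}$ to $D_{2n}$, lower semicontinuity of $h$ yields a drop of $\frac{1}{2n-1}+\frac{1}{2n}$. Passing from $D_{2n}$ to $D_{2n+1}$, lower semicontinuity of $k$ merely preserves the bound. No $\varepsilon$'s are needed, since suprema over entire levels are compared.

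You instead follow a single finite chain $z_{N+1},\dots,z_1$, built with neighbourhoods rather than sequences. Your jump inequality $a(w)+b(w)\ge a(z)+b(z)+|g(w)-g(z)|-2\varepsilon$ is symmetric in $a$ and $b$, so every step gains $\frac1n+\frac1{n+1}$ regardless of parity. The price is the $\varepsilon=1/N^2$ error control and rebuilding the chain for each $N$.

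What your route buys is that it works on an arbitrary topological space; as you note, metrizability is unnecessary. It also isolates a reusable principle: $a+b$ dominates the accumulated oscillation of $g=a-b$ along any chain in which each point lies in the lower-semicontinuity neighbourhood of its predecessor. Your Step 1 additionally makes explicit the reduction of $\DBSC(K)$ to differences of lower semicontinuous functions, which the paper uses without comment.
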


\begin{proof}
Suppose on the contrary that there exists $g\in \DBSC(K)$ so that $S: = [f\neq g]$ is at most countable. We may write $g$ as $h-k$, where $h,k$ are bounded lower semicontinuous functions on $K$.
Since $f = \frac{(-1)^n}{n}$ on $F_{n-1}\bs F_n$, $h = k + \frac{(-1)^n}{n}$ on $F_{n-1}\bs (F_n \cup S)$ for any $n\in \N$.  For any $n$,  $F_n\neq \emptyset$; hence it follows from (b) that $F_{n-1}\bs (F_n\cup S)$ is nonempty.
Fix $n\in \N$ and suppose that $x\in F_{2n-1}\bs (F_{2n}\cup S)$. Let $B(x,\frac{1}{m})$ be the open unit ball with radius $\frac{1}{m}$, centered at $x$. By (b), we have that $B(x,\frac{1}{m}) \cap (F_{2n-2}\bs F_{2n-1})$ is uncountable. Thus $B(x,\frac{1}{m}) \cap F_{2n-2}\bs(F_{2n-1} \cup S) \neq \emptyset$ for each $m \in \mathbb{N}$. In particular, there exists a sequence  $(x_m)$ in $F_{2n-2}\bs(F_{2n-1}\cup S)$ that converges to $x$. By lower semicontinuity of $h$,
\begin{align*}
k(x) +\frac{1}{2n} & = 
h(x) \leq \liminf h(x_m)  \\&= \liminf k(x_m) -\frac{1}{2n-1}\leq \sup_{y\in F_{2n-2}\bs (F_{2n-1}\cup S)}k(y) - \frac{1}{2n-1}.\notag\end{align*}
Hence 
\begin{equation}\label{eq1}
\sup_{x\in F_{2n-1}\bs(F_{2n}\cup S)}k(x) \leq \sup_{y\in F_{2n-2}\bs (F_{2n-1}\cup S)}k(y) - \frac{1}{2n-1}-\frac{1}{2n}.
\end{equation}
Now assume that $z\in  F_{2n}\bs (F_{2n+1}\cup S)$. By (b) again, we can choose a sequence $(y_m)$ in $F_{2n-1}\bs (F_{2n}\cup S)$ that converges to $z$.
By (\ref{eq1}) and lower semicontinuity of $k$,
\[ k(z)  \leq \liminf k(y_m) \leq\sup_{y\in F_{2n-2}\bs (F_{2n-1}\cup S)}k(y) - \frac{1}{2n-1}-\frac{1}{2n}.
\]
It follows that 
\begin{equation}\label{eq2} \sup_{z\in F_{2n}\bs (F_{2n+1}\cup S)}k(z) \leq \sup_{y\in F_{2n-2}\bs (F_{2n-1}\cup S)}k(y)-\frac{1}{2n-1} - \frac{1}{2n}\ \text{ for any $n\in \N$}.
\end{equation}
Hence
\[ \sup_{z\in F_{2n}\bs(F_{2n+1}\cup S)}k(z) \leq \sup_{y\in F_0\bs (F_1\cup S)}k(y)-\sum^{2n}_{j=1}\frac{1}{j}.\]
This is impossible since $k$ is bounded and $ F_{2n}\bs(F_{2n+1}\cup S)$ is nonempty for any $n$.
\end{proof}

\begin{theorem}\label{thm2}
Let $K$ be an uncountable Polish space.  There exists $f\in \cB^1_1(K)$ so that $[f\neq g]$ is uncountable for any $g\in \DBSC(K)$.
\end{theorem}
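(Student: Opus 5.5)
The plan is to reduce Theorem~\ref{thm2} to Proposition~\ref{p1}. It suffices to exhibit, inside the uncountable Polish space $K$, a sequence $(F_n)_{n\ge0}$ of nonempty closed sets satisfying (a) and (b). The function $f=\sum_n \frac{(-1)^n}{n}\mathbbm{1}_{F_{n-1}\bs F_n}$ then lies in $\cB^1_1(K)$ by the remark preceding Proposition~\ref{p1}. Since $K$ is metrizable, that proposition shows $[f\neq g]$ is uncountable for every $g\in\DBSC(K)$.

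\textbf{Reduction to the Cantor set.} Since $K$ is an uncountable Polish space, it contains a homeomorphic copy $C$ of the Cantor space $2^{\N}$, which is closed in $K$ because it is compact (Cantor--Bendixson / perfect set theorem, e.g.~\cite{K:94}). I would take $F_0=K$ and choose $F_1\subsetneq F_2$-type sets inside $C$, namely $F_1 := C$ and $F_n\subset C$ for $n\ge 2$. Condition (b) for $n=1$ then requires that every neighbourhood of a point of $C$ meet $K\bs C$ in an uncountable set, and this can fail. For instance, if $K=C$, then $K\bs C$ is empty.

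To avoid this, I would instead build all the $F_n$ inside a copy of $C\cong 2^{\N}\cong (2^{\N})^{\N}$ and set $F_0:=C$ as well. Condition (b) only involves points of $F_n$ with $n\ge1$ and the differences $F_{n-1}\bs F_n$, so replacing $K$ by $C$ in (a) requires $F_0=K$. To handle this I would modify the function: take $F_0'=K$, $F_1'=C$, and use the chain $C=G_0\supsetneq G_1\supsetneq\cdots$ constructed inside $C$ for the later terms. Rather than fight (b) at level $1$, I would note that the proof of Proposition~\ref{p1} applies verbatim to the closed subspace $C$. Precisely, if $g\in\DBSC(K)$ then $g|_C\in\DBSC(C)$, since restrictions of lower semicontinuous functions are lower semicontinuous. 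So I would define $f$ on $C$ via the chain in $C$ satisfying (a), (b) relative to $C$, and extend it by $0$ off $C$.

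\textbf{Checking the extension.} We have $\mathbbm{1}_{G_{n-1}\bs G_n}=\mathbbm{1}_{G_{n-1}}-\mathbbm{1}_{G_n}$ with $G_i$ closed in $K$, so this is still a difference of bounded upper semicontinuous functions on $K$. Hence $f\in\cB^1_1(K)$. Moreover $[f\neq g]\supseteq[f|_C\neq g|_C]$, which is uncountable by Proposition~\ref{p1} applied in $C$.

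\textbf{Construction in $2^{\N}$ (the main step).} Identify $C$ with $X=(2^{\N})^{\N}$ and set
\[G_n=\{x\in X: x_1=\dots=x_n=\bar 0\},\]
where $\bar0$ is the zero sequence. Each $G_n$ is closed and nonempty, and the chain is strictly decreasing. For (b), let $x\in G_n$ and let $U$ be a basic neighbourhood of $x$. Then $U$ restricts finitely many coordinates $x_1,\dots,x_N$ to neighbourhoods of their values, with $N\ge n$ after shrinking. Elements of $G_{n-1}\bs G_n$ in $U$ are obtained by fixing $x_1=\dots=x_{n-1}=\bar0$, choosing $x_n\neq\bar0$ inside the given neighbourhood of $\bar0$ (uncountably many choices, since $2^{\N}$ is perfect), and letting the coordinates beyond $N$ be free. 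This gives uncountably many points. The only delicate point I anticipate is verifying this uncountability carefully and the homeomorphism $(2^{\N})^{\N}\cong2^{\N}$, both of which are standard.
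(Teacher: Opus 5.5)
Your proof is correct, and your chain $G_n=\{x: x_1=\dots=x_n=\bar 0\}$ in $(2^{\N})^{\N}$ is exactly the paper's $A_n=\{a: a(i,j)=0 \text{ for } 1\le i\le n\}$ in $\{0,1\}^{\N\times\N}$. The only real difference is how you pass from the Cantor copy $C$ back to $K$.

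You apply Proposition~\ref{p1} to the compact metrizable space $C$ itself, with $G_0=C$, and extend $f$ by $0$ off $C$. Since restrictions of bounded semicontinuous functions remain semicontinuous, $g|_C\in\DBSC(C)$, so $[f\neq g]\supseteq[f|_C\neq g|_C]$ is uncountable. Membership of $f$ in $\cB^1_1(K)$ holds because the $G_n$ are closed in $K$.

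The paper avoids this detour by setting $F_0=K$ and $F_n=\iota(A_n)$ for $n\ge 1$, and checking (b) directly in $K$. The obstruction you anticipated at level $1$ only appears if you take $F_1$ to be all of $C$. With $F_1=\iota(A_1)$, every neighbourhood of a point of $F_1$ already contains uncountably many points of $\iota(\{0,1\}^{\N\times\N}\bs A_1)\subseteq K\bs F_1$. This follows from the same coordinate-flipping argument used at higher levels.

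Your route costs one extra, easy restriction step. The paper's is shorter and uses Proposition~\ref{p1} exactly as stated for $K$. In a final write-up, drop the abandoned intermediate attempts, such as ``$F_1\subsetneq F_2$-type sets'' and the $F_0'$, $F_1'$ variant, and go straight to the restriction argument.
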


\begin{proof}
In view of Proposition \ref{p1}, it suffices to construct a sequence of closed sets $(F_n)^\infty_{n=0}$ 
in $K$ satisfying conditions (a) and (b).
By \cite[Corollary 6.5]{K:94}, $K$ contains a homeomorphic copy of the Cantor set, which we realize as $\{0,1\}^{\N\times \N}$.
Let $\iota:\{0,1\}^{\N\times \N}\to K$ be a homeomorphic embedding.
For $n\in \N$, define 
\[ A_n = \{a\in \{0,1\}^{\N\times \N}: a(i,j) = 0 \text{ if $1\leq i\leq n$}\} \text{ and } F_n = \iota(A_n).\] 
Also set $F_0 = K$. Clearly $(F_n)^\infty_{n=0}$ is a sequence of closed sets in $K$ so that $F_n\subsetneq F_{n-1}$ for any $n\in \N$.
Suppose that $x\in F_n$ for some $n\in \N$.  Thus $x= \iota(a)$ for some $a\in A_n$.
Let $U$ be an open neighbourhood of $x$. Then $V:=\iota^{-1}(U)$ is an open neighbourhood of $a$ in $\{0,1\}^{\N\times \N}$.  
There is a finite set $I\subseteq \N\times \N$ so that 
\[ W: = \{b\in \{0,1\}^{\N\times \N}: b(i,j) = a(i,j) \ \forall (i,j)\in I\} \subseteq V.\]
Pick $(n,j_0)\in I^c$.
If $b\in \{0,1\}^{\N\times \N}$ satisfies $b(n,j_0) =1$ and $b(i,j) = a(i,j)$  whenever $(i,j) \in I \cup (\{1,\dots,n-1\}\times \N)$, 
then $b\in W$ and hence $\iota(b) \in U$.  
Also, $b(i,j) = a(i,j) =0$ if $1\leq i\leq n-1$.  Thus $b\in A_{n-1}$.  Since $b(n,j_0) \neq0$, $b\notin A_n$. 
Therefore, $\iota(b) \in F_{n-1}\bs F_n$.
Since there are uncountably many such $b$'s, $U\cap (F_{n-1}\bs F_n)$ is uncountable. 
\end{proof}

\begin{remark}\label{remark}
We recall that a sequence $(x_n)$ in an order complete vector lattice $E$ order converges to $x \in E$ if there exists a sequence $(y_n)$ in $E$ such that $|x_n-x| \leq y_n \downarrow 0$. Every $x \in E^{\sigma m}$ is clearly an order limit of a sequence in $E$. In the following, we show that for a class of $CD_{\omega}(K)$-spaces there exist sequential order limits from $CD_{\omega}(K)$ that do  not belong to $CD_{\omega}(K)^{\sigma m}$. First note that order convergence for order bounded sequences in $B(K)$ is equivalent to pointwise convergence (apply e.g. \cite[Lemma 8.17]{AB:06}, \cite[Corollary 2.12]{GTX:17}). Next, take $K$ to be a perfect Polish space. By Theorem~\ref{thm1}, we may identify $CD_{\omega}(K)^{\sigma m}$ with $\{f \in B(K) \,\, |\,\, \exists  g \in \DBSC(K) \,\, : [f \neq g] \text{ is at most countable} \}$. Take  $f$ as in Theorem~\ref{thm2}. Then by applying \cite[Lemma 24.11]{K:94}, one can readily see that $f$ is the pointwise limit of a sequence $(g_n)$ from $CD_\omega(K)$ and by replacing $(g_n)$ with $g_n \wedge ||f||_{\infty} \vee (-||f||_{\infty})$ we may assume that $(g_n)$ is order bounded. In particular, $f$ is an order limit of a sequence from  $CD_{\omega}(K)$, yet  Theorem~\ref{thm2}, asserts that $f$ is not in $CD_{\omega}(K)^{\sigma m}$.
\end{remark}

We are now in position to state our main result. Let $E$ be an Archimedean vector lattice and $e \in E_+$. The principal ideal of $e$ and the corresponding norm on it are defined as follows: $I_e=\{x \in E \,\, | \,\, |x| \leq \lambda e \text{ for some } \lambda \in \mathbb{R}_+\},||x||_e=\inf\{\lambda \geq 0 \,\, | \,\, |x| \leq \lambda e\}$. We say that $E$ is \emph{uniformly complete} if $(I_e,||\cdot||_e)$ is complete for all $e \in E_+$. We also note that every Banach lattice is uniformly complete (see e.g.~ \cite[Corollary 2.59]{AT:07}), and that uniform completeness is invariant between lattice isomorphic spaces. 

\begin{theorem}\label{mainthm}
Let $K$ be a perfect Polish space, then $CD_{\omega}(K)^{\sigma m}$ is not uniformly complete.
\end{theorem}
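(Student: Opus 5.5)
We argue by contradiction, using the unit $e=\mathbbm{1}_K$ and the function $f$ from Theorem~\ref{thm2}. By Proposition~\ref{propbasic}, $B(K)$ is an order completion of $CD_\omega(K)$. Theorem~\ref{thm1} then realizes $CD_\omega(K)^{\sigma m}$ as the sublattice
\[L=\{h\in B(K) \,\,|\,\, \exists g\in \DBSC(K): [h\neq g] \text{ is at most countable}\}\]
of $B(K)$. Since uniform completeness is invariant under lattice isomorphism, it suffices to show that $L$ is not uniformly complete.

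Take $e=\mathbbm{1}_K\in L_+$. Every element of $L$ is bounded, so $I_e=L$, and $\|\cdot\|_e$ coincides with $\|\cdot\|_\infty$ on $L$. Thus the task reduces to showing that $(L,\|\cdot\|_\infty)$ is not complete, i.e.\ that $L$ is not closed in $(B(K),\|\cdot\|_\infty)$.

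A perfect Polish space is nonempty and hence uncountable, so Theorem~\ref{thm2} supplies $f\in\cB^1_1(K)$ such that $[f\neq g]$ is uncountable for every $g\in\DBSC(K)$. By the definition of $L$, this says exactly that $f\notin L$. On the other hand, $f\in\cB^1_1(K)$ is a uniform limit of a sequence $(g_n)\subset \DBSC(K)$. The constant zero function differs from $g_n$ only on $[g_n\neq g_n]=\emptyset$, so $\DBSC(K)\subset L$. Hence $(g_n)$ is $\|\cdot\|_\infty$-Cauchy in $L$, equivalently $\|\cdot\|_e$-Cauchy in $I_e$.

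Suppose $g_n\to h$ in $(I_e,\|\cdot\|_e)$ for some $h\in L$. Convergence in $\|\cdot\|_e$ is uniform convergence, so $h=f$ pointwise, and therefore $f\in L$, which is a contradiction. So $(I_e,\|\cdot\|_e)$ is not complete, and $CD_\omega(K)^{\sigma m}$ is not uniformly complete.

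The only point needing care is the identification of $\|\cdot\|_e$ with $\|\cdot\|_\infty$, which holds because the order on $L$ is the pointwise order inherited from $B(K)$. Everything else is a direct assembly of Theorems~\ref{thm1} and~\ref{thm2}.
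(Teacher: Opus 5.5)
Your proof is correct and follows the paper's approach: identify $CD_\omega(K)^{\sigma m}$ with the set $L$ via Theorem~\ref{thm1}, take $e=\mathbbm{1}_K$ so that $\|\cdot\|_e=\|\cdot\|_\infty$ on $I_e=L$, and use Theorem~\ref{thm2} to get a uniform limit of elements of $\DBSC(K)\subset L$ that lies outside $L$. You simply spell out the details that the paper's two-line proof leaves implicit.
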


\begin{proof}
In view of Theorem~\ref{thm1} we may identify $CD_{\omega}(K)^{\sigma m}$ with 
$\{f \in B(K) \,\, |\,\, \exists  g \in \DBSC(K) \,\, : [f \neq g] \text{ is at most countable} \}$. Theorem~\ref{thm2} asserts that $(CD_{\omega}(K)^{\sigma m},||\cdot||_{\mathbbm{1}_{K}})$  is not complete and the claim follows.

 \end{proof}

We recall that $E$ is said to be an \emph{almost $\sigma$-order complete} vector lattice if it can be embedded as a super order dense sublattice of a  $\sigma$-order complete vector lattice (see e.g. \cite{AL:74,Q:75}). Next, we examine the relationship between the $\sigma$-order completeness of $E^{\sigma m}$ and the almost $\sigma$-order completeness of $E$. The following result is well known, for the convenience of the reader, we provide an independent  proof here.

\begin{proposition}
Let $E$ be an almost $\sigma$-order complete vector lattice, then $E^{\sigma m}$ is $\sigma$-order complete. 
\end{proposition}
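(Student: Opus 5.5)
Let $F$ be a $\sigma$-order complete vector lattice in which $E$ sits as a super order dense sublattice. Super order dense means that every $0\le x\in F$ is the supremum of an increasing sequence from $E_+$. The plan is to do all computations inside $E^\delta$ and to use $F$ only as an intermediate object.

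\emph{Step 1: place $F$ inside $E^\delta$.} Since $E$ is order dense in $F$, the Dedekind completion of $F$ can be taken to be $E^\delta$. So we may regard $E\subset F\subset E^\delta$, with $F$ order dense in $E^\delta$. A sublattice that is order dense and $\sigma$-order complete is $\sigma$-regular, so countable suprema computed in $F$ agree with those computed in $E^\delta$. Super order density then gives $F_+\subset u(E)$, and hence $F\subset u(E)-u(E)$. Conversely, if $x_n\uparrow x$ in $E^\delta$ with $x_n\in E$, then $x_n$ is bounded above in $F$: indeed $E$ is majorizing in $E^\delta$, so $x\le e$ for some $e\in E$. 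By $\sigma$-completeness, $\sup_F x_n$ exists, and by $\sigma$-regularity it equals $x$. So $u(E)\subset F$, and therefore
\[ E^{\sigma m}=u(E)-u(E)=F_{\text{maj}}, \]
where $F_{\text{maj}}$ denotes the set of elements of $F$ that are majorized by $E$, namely $\{x\in F: |x|\le e \text{ for some } e\in E\}$. To check this, note that $u(E)\subset F$ and every element of $u(E)$ is dominated by elements of $E$. Conversely, if $x\in F$ with $|x|\le e$, then $x^+,x^-\in F_+\subset u(E)$.

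\emph{Step 2: $\sigma$-completeness.} Let $(x_n)\subset E^{\sigma m}$ be bounded above in $E^{\sigma m}$, say by $y$, and pick $e\in E$ with $y\le e$. Replacing $x_n$ by $x_n\vee x_1$, we may assume the sequence is bounded below as well. Since $F$ is $\sigma$-order complete, $s=\sup_F x_n$ exists. This $s$ is also the supremum in $E^\delta$, and hence in the smaller space $E^{\sigma m}$. Moreover $x_1\le s\le e$ with $x_1\in E^{\sigma m}$, so $|s|$ is dominated by an element of $E$. By Step 1, $s\in E^{\sigma m}$, and it is the supremum there.

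\emph{Main obstacle.} The key technical point is the identification $F\hookrightarrow E^\delta$ together with the $\sigma$-regularity of $F$ in $E^\delta$. Concretely, this is the claim that if $x_n\uparrow x$ in $F$, then $x=\sup x_n$ also in $E^\delta$. My approach is as follows. Suppose $z\in E^\delta$ is an upper bound of the $x_n$ with $z<x$. By order density of $E$ in $E^\delta$, choose $0<w\le x-z$ with $w\in E$. Then $x_n\le x-w$ for all $n$. But $x-w\in F$, so this contradicts $x=\sup_F x_n$.

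If one prefers to avoid realizing $F$ inside $E^\delta$, an alternative is to map $F$ into $E^\delta$ by $x\mapsto \sup\{e\in E: e\le x\}$. This is an embedding because of order density, and it is exactly the above identification.
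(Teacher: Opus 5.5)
Your Step 1 rests on a false claim: $F$ need not embed in $E^\delta$. Super order density makes $E$ order dense in $F$, but not majorizing, whereas every element of $E^\delta$ is dominated by an element of $E$.

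For example, $E=c_{00}$ is super order dense in the order complete space $F=\ell^\infty$, since the truncations $(x_1,\dots,x_n,0,0,\dots)$ increase to any $x\ge 0$. Yet $c_{00}$ is itself Dedekind complete, so $E^\delta=c_{00}\not\supset\ell^\infty$. Your alternative map $x\mapsto\sup\{e\in E:\ e\le x\}$ fails for the same reason: for $x=(1,1,\dots)$ this set has no upper bound in $E^\delta$. So Step 1, and the $\sigma$-regularity argument that presupposes $F\subset E^\delta$, is not justified for an arbitrary witness $F$.

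The repair is the same ideal-passage the paper performs. At the outset, replace $F$ by $G:=F_{\text{maj}}=\{x\in F:\ |x|\le e \text{ for some } e\in E\}$, the ideal generated by $E$ in $F$. Then:
\begin{itemize}
\item as an ideal of a $\sigma$-order complete lattice, $G$ is $\sigma$-order complete, and in particular Archimedean;
\item $E$ is still super order dense in $G$;
\item $E$ is now order dense and majorizing in $G$, hence in $G^\delta$, so $G^\delta$ is an order completion of $E$ and $E\subset G\subset E^\delta$ is legitimate.
\end{itemize}
With $G$ in place of $F$ your argument goes through. In the $\sigma$-regularity step you should first replace an arbitrary upper bound $z$ by $z\wedge x$ before choosing $w$. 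The argument then gives $E^{\sigma m}=G$, and $\sigma$-order completeness follows at once.

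The paper takes the ideal $I$ generated by $E$ in $X^\delta$ as $E^\delta$. It uses super order density to show $u(E)=l(E)$, hence $E^{\sigma m}=u(E)$, and finishes with a diagonal argument showing that $u(E)$ is closed under suprema of order bounded increasing sequences. Your repaired route instead identifies $E^{\sigma m}$ concretely with the ideal generated by $E$ in $F$ (that is, $X\cap I$ in the paper's notation) and inherits $\sigma$-completeness directly from $F$, so no diagonal argument is needed.

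Both proofs hinge on the same fact: suprema of sequences computed in the $\sigma$-order complete space agree with those computed in the order completion. You prove this explicitly; the paper uses it tacitly when it asserts $x\in X$ for $x\in u(E)$.
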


\begin{proof}
Let $X$ be $\sigma$-order complete space such that $E$ can be viewed as a super order dense sublattice of $X$, that is, $E$ is a sublattice of $X$ and  for every $x \in X_+$ there exists $(x_n) \subset E_+$ such that $x_n \uparrow x$. In the following we will also view $X$ as an order dense and majorizing sublattice of its order completion $X^{\delta}$. Let $I:=\{y \in X_{\delta} \,\, | \,\, \exists x \in E \,\, : \,\, |y|\leq |x|\}$ be the ideal generated by $E$ in $X^{\delta}$. Then $I$ is order complete and  $E$ is order dense and majorizing in $I$, thus $I\cong E^{\delta}$.  We put $u(E)$ and $l(E)$ to be the set of $\sigma$-upper elements and $\sigma$-lower elements of $E$ in $I$ respectively.  We will show next that $l(E)=u(E)$. Take $x \in u(E)$, then  $x \in X$ and there exists $v \in E$ such that $x \leq v$, in particular $v-x \in X_+$ thus there exists a sequence $(v_n) \subset E_+$ such that $v_n \uparrow v-x$, thus $v-v_n \downarrow x$ and $x \in l(E)$. Similarly, we can show $l(E) \subset u(E)$. Now, we note  $(u(E)-u(E))^+=(u(E)-l(E))^+=(u(E)+u(E))^+=u(E)^+$. Take $(x_n) \subset (E^{\sigma m})_+=u(E)^+$ such that $(x_n)$ is increasing and order bounded in $E^{\sigma m}$.  Then there exists $x \in I$ such that $x_n \uparrow x$ and $(w_{n,k})_{k \in \mathbb{N}}$ in $E_+$ such that $x_n=\displaystyle\sup_{k \in \mathbb{N}}w_{n,k}$ for all $n \in \mathbb{N}$. Put $w_n=\bigvee_{k=1}^n w_{n,k}$ for all $n \in \mathbb{N}$, then $(w_n) \subset E$ and it is easy to see $w_n \uparrow x$, thus $x \in E^{\sigma m}$ and the claim follows. 
\end{proof}

The next result shows  that the  converse of the above fact is not true in general.

\begin{proposition}
Let $K_{\infty}=K\cup\{\infty\}$ be the one point compactification of an uncountable set $K$ equipped  with the the discrete topology. Then $C(K_{\infty})^{\sigma m}$ is $\sigma$-order complete, yet $C(K_{\infty})$ is not almost $\sigma$-complete.
\end{proposition}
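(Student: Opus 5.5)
Two claims must be checked for $E=C(K_\infty)$: that $E^{\sigma m}$ is $\sigma$-order complete, and that $E$ is not almost $\sigma$-order complete.

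\textbf{Computing $E^{\sigma m}$.} The space $K_\infty$ is compact, and $E$ is order dense and majorizing in $B(K_\infty)$: an isolated point $x\in K$ gives $\mathbbm{1}_{\{x\}}\in E$, and $\infty$ can be ignored since, roughly, every nonzero positive function that lives only at $\infty$ is... Here care is needed, because $\mathbbm{1}_{\{\infty\}}$ has no continuous minorant. So $E^{\delta}$ should instead be identified with $\ell^\infty(K)$ via restriction to $K$. Under restriction, $E$ becomes the set of $f\in\ell^\infty(K)$ for which there is a constant $c$ with $f-c\in c_0(K)$, where $c_0(K)$ is the space of functions vanishing at infinity. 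This space is order dense and majorizing in the order complete space $\ell^\infty(K)$, so $E^\delta\cong\ell^\infty(K)$.

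Let $f_n\in E$ with $f_n\uparrow f$ pointwise in $\ell^\infty(K)$, and write $f_n=c_n+h_n$. The support of $h_n$ (up to $\varepsilon$) is countable, so outside the countable set $C=\bigcup_n\{h_n\neq0\}$ each $f_n$ equals $c_n$. The $c_n$ increase to some $c$, because $K\setminus C\neq\emptyset$ (as $K$ is uncountable). Hence $f$ is constant off a countable set. Conversely, suppose $f=c$ off a countable set $\{x_k\}$. Then $f_n:=c\mathbbm{1}+\sum_{k\le n}(f(x_k)-c)\mathbbm{1}_{\{x_k\}}$ lies in $E$ and converges to $f$. Adjusting with $\min$ and $\max$ against $-\|f\|$ gives a monotone approximation: take $f_n=(c-1/n)\mathbbm{1}\vee\bigvee_{k\le n}f(x_k)\mathbbm{1}_{\{x_k\}}$, suitably shifted so that the values are increasing. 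It is then routine that $E^{\sigma m}=\{f\in\ell^\infty(K): f \text{ is constant off a countable set}\}$.

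\textbf{$\sigma$-order completeness of $E^{\sigma m}$.} Let $(f_n)$ be an order bounded increasing sequence in this space. Its supremum in $\ell^\infty(K)$ is the pointwise supremum. Off the countable union of the exceptional sets, each $f_n$ equals a constant $c_n$, and these constants increase to $c$. So the supremum is constant off a countable set, and it is therefore the supremum in $E^{\sigma m}$.

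\textbf{$E$ is not almost $\sigma$-order complete.} Suppose $E$ were super order dense in some $\sigma$-order complete space $X$. Choose disjoint countable infinite sets $A_1,A_2\subset K$ and put $u_i=\sup_n \mathbbm{1}_{F_n^i}$ in $X$, where $F_n^i\uparrow A_i$ are finite. Super order density forces every element of $X_+$ to be a countable supremum from $E_+$. The plan is to derive a contradiction via a Quinn-type characterization of almost $\sigma$-order completeness: $E$ is almost $\sigma$-order complete iff every increasing bounded sequence in $E$ has a supremum in its $\sigma$-closure compatible with "order dense" conditions. Equivalently, one can use the fact that $u(E)=l(E)$ holds in that case, as shown in the previous proof. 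So we exhibit an element of $u(E)\setminus l(E)$, namely $\mathbbm{1}_A$ for a countable infinite $A\subset K$. It lies in $u(E)$ via the finite indicators. It does not lie in $l(E)$: if $g_n\in E$ and $g_n\downarrow\mathbbm{1}_A$, then $g_n\geq 1$ on the infinite set $A$, so $g_n\geq1-o(1)$ near $\infty$ and hence $c_n\geq1$. Then $\mathbbm{1}_A\geq$ the limit, which is $\ge1$ off a countable set, a contradiction.

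The main obstacle is ensuring that the identity $u(E)=l(E)$ from the preceding proof is taken with respect to $E^\delta$. That proof realized $E^\delta$ as an ideal in $X^\delta$, and $u(E)$, $l(E)$ are independent of the realization by uniqueness of the order completion. This gives the claim.
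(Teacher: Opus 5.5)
Your identification of $E^\delta$ with $\ell^\infty(K)$ via restriction, your proof that $u(E)\subseteq M:=\{f\in\ell^\infty(K): f\text{ is constant off a countable set}\}$ (essentially the paper's argument), and your Part 2 are fine. The gap is the reverse inclusion $M\subseteq E^{\sigma m}$. Part 2 genuinely needs it, because knowing only $E^{\sigma m}\subseteq M$ does not show that suprema stay in $E^{\sigma m}$.

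Your sequence $g_n=(c-\frac1n)\mathbbm{1}_K\vee\bigvee_{k\le n}f(x_k)\mathbbm{1}_{\{x_k\}}$ does not converge to $f$. Off $\{x_k\}$ it tends to $c\vee0$, and at $x_k$ it tends to $c\vee f(x_k)\vee0$. More seriously, no ``suitable shift'' can rescue it. A single increasing sequence from $E$ would put $f$ in $u(E)$, and $M\not\subseteq u(E)$. Indeed, for countably infinite $A\subset K$ one has $-\mathbbm{1}_A\in M$, while $-\mathbbm{1}_A\in u(E)$ would mean $\mathbbm{1}_A\in l(E)$, which is exactly what your Part 3 refutes. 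So, as written, Parts 1 and 3 contradict each other.

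The repair is to aim for a difference. Put $h=f-c\mathbbm{1}_K$, which has countable support, and choose finite $G_n\uparrow[h\neq0]$. Then $c\mathbbm{1}_K+h^+\mathbbm{1}_{G_n}\uparrow c\mathbbm{1}_K+h^+$ and $h^-\mathbbm{1}_{G_n}\uparrow h^-$ are increasing sequences in $E$, since finitely supported functions extend continuously by $0$ at $\infty$. Hence $f=(c\mathbbm{1}_K+h^+)-h^-\in u(E)-u(E)$. This is Lemma~\ref{mainlemma}, which is what the paper invokes at this point.

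For the failure of almost $\sigma$-order completeness, the paper simply cites Wickstead's Example~3.4. You instead give a self-contained argument, and it is valid:
\begin{itemize}
\item the proof of the preceding proposition shows that almost $\sigma$-order completeness forces $u(E)=l(E)$ in a Dedekind completion;
\item this transfers to $\ell^\infty(K)$, because the isomorphism between two Dedekind completions extending the embedding of $E$ preserves suprema and infima;
\item $\mathbbm{1}_A\in u(E)\setminus l(E)$ by your continuity-at-$\infty$ argument.
\end{itemize}
State the final contradiction cleanly: $\inf_n g_n=\mathbbm{1}_A$ vanishes on the uncountable set $K\setminus(A\cup\bigcup_n[g_n\neq c_n])$, where $g_n=c_n\ge1$ for all $n$. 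Your route buys independence from the external reference and isolates $u(E)\neq l(E)$ as a concrete obstruction.

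Drop the digression on $A_1,A_2$ and the ``Quinn-type characterization''. It is vague and unused, and ``equivalently'' is unjustified, since only the one implication from the previous proof is needed.
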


\begin{proof}
The fact that $C(K_{\infty})$ is not almost $\sigma$-order complete  follows by \cite[Example 3.4]{W:24}. We define $T:C(K_{\infty}) \rightarrow B(K)$ as follows $T(f)=f\restriction_K$ for all $f \in C(K_{\infty})$. It is clear that $T$ is a lattice homomorphism.  We also note that $T$ is one to one. Indeed, let $f,g \in C(K_{\infty})$ such that $f\restriction_K=g\restriction_K$. Since $f,g$ are continuous on $K_{\infty}$ and $K$ is dense in $K_{\infty}$ we get $f=g$. It is also clear that $\mathbbm{1}_{K} \in T(C(K_{\infty}))$ and $\mathbbm{1}_{x} \in T(C(K_{\infty}))$ for all $x \in K$. The above shows $C(K_{\infty}) \cong T(C(K_{\infty}))$ and $C(K_{\infty})^{\delta}\cong B(K)$. In the following we will identify $C(K_{\infty})$ with $T(C(K_{\infty}))$. We will show

$$u(C(K_{\infty}))-u(C(K_{\infty}))=\{f \in B(K) \,\, | \,\, \exists c \in \mathbb{R} \,\, : \,\, [f \neq c] \,\, \text{at most} \,\, \text{countable} \}$$

We note first  $\{f \in B(K) \,\, | \,\, \exists c \in \mathbb{R} \,\, : \,\, [f \neq c] \,\, \text{at most} \,\, \text{countable} \}$ is a $\sigma$-order complete sublattice of $B(K)$. Let $f \in B(K)$ and $c \in \mathbb{R}$ such that $[f \neq c]$ is at most countable. Then since the constant function $c$ is in  $C(K_{\infty})$,  an application of Lemma~\ref{mainlemma} yields $f \in  u(C(K_{\infty}))-u(C(K_{\infty}))$.  Next we show that $C(K_{\infty}) \subset \{f \in B(K) \,\, | \,\, \exists c \in \mathbb{R} \,\, : \,\, [f \neq c] \,\, \text{at most} \,\, \text{countable} \}$. Let $f \in C(K_{\infty})$ and $c=f(\infty)$. For each $n \in \mathbb{N}$, put $U_n=f^{-1}(c-\frac{1}{n},c+\frac{1}{n})$, then $U_n$ is open in $K_\infty$, thus $U_n^c$ is a closed subset of $K_{\infty}\setminus\{\infty\}$ and therefore $U_n^c$ is at most finite.  We also  have that $[f \neq c] \subset \bigcup_{n=1}^\infty U_n^c$, thus $[f \neq c]$ is at most countable.  Thus we have $u(C(K_{\infty}))-u(C(K_{\infty})) \subset \{f \in B(K) \,\, | \,\, \exists c \in \mathbb{R} \,\, : \,\, [f \neq c] \,\, \text{at most} \,\, \text{countable} \}$ and the claim follows. 
\end{proof}

Let $X$ be a partially ordered vector space, we say that $X^{\rho}$ is a Riesz completion of $X$ whenever $X^{\rho}$ is a vector lattice and there exists a bipositive linear map $i:X \rightarrow X^{\rho}$ such that $i(X)$ is order dense in $X^{\rho}$ and the sublattice of $X^{\rho}$ generated by $i(X)$ is equal to $X^{\rho}$. Let $E,F$ be Banach lattices, then the space of all regular operators between $E$ and $F$ is the space generated by the positive linear operators $T:E \rightarrow F$ and is denoted by $\mathcal{L}^r(E,F)$. In \cite{W:24}, Wickstead gave a sufficient condition on $F$ under which the Riesz completion of $\mathcal{L}^r(c,F)$, where $c$ is the space of convergent sequences, is uniformly complete. In the following we improve this result by providing a necessary and sufficient condition.

\begin{theorem}
Let $F$ be a Banach lattice. The following are equivalent. 

\begin{enumerate}
\item[(i)] $F^{\sigma m}$ is uniformly complete.
\item[(ii)] The  Riesz completion of $\mathcal{L}^r(c,F)$ is  uniformly complete.
\end{enumerate}

\end{theorem}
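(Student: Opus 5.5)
We first identify the Riesz completion of $\mathcal{L}^r(c,F)$ concretely, and then reduce both statements to one comparison. An operator $T\in\mathcal{L}^r(c,F)$ is determined by $y_n=Te_n$ ($e_n$ the unit vectors) and $y_\infty=T\mathbbm{1}$. Regularity (Wickstead's analysis in \cite{W:24}) means $\sum_n |y_n|$ has order-bounded partial sums and $y_\infty-\sum_n y_n$ is an order limit in the appropriate sense. Following \cite{W:24}, we realize $\mathcal{L}^r(c,F)$ as a partially ordered subspace of a lattice of the form
\[
\{(y_n)_{n\in\N\cup\{\infty\}} \subset F^{\delta} \,:\, \text{suitable boundedness}\},
\]
with the lattice operations computed coordinatewise in $F^\delta$. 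The positive operators correspond to $y_n\ge 0$ and $y_\infty \ge \sup_N \sum_{n\le N} y_n$ in $F^\delta$.

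The plan is then to compute the sublattice generated by $\mathcal{L}^r(c,F)$ there. This sublattice is the Riesz completion, since order density should follow from the finite-support operators. We aim to show that it is lattice isomorphic to a space $G$ built from $F$ and $F^{\sigma m}$. The expected form is pairs (sequence part in $F$, ``tail'' part in $F^{\sigma m}$), where the tail part arises as $y_\infty-\sum_n y_n$ and hence lives in $u(F)-u(F)=F^{\sigma m}$. Concretely, we need two facts. First, every element of the generated sublattice has its $\infty$-coordinate defect lying in $F^{\sigma m}$; for this, use the identity $(x-y)^+=x\vee y-y$ and the fact from the Introduction that $u(F)-u(F)$ is a sublattice. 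Second, every element of $F^{\sigma m}$ occurs this way. Given $u\in u(F)$ with $x_k\uparrow u$, take $y_1=x_1$ and $y_k=x_k-x_{k-1}$, which yields a regular operator whose defect is any prescribed element of $u(F)$; differences then give all of $F^{\sigma m}$.

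For the equivalence, we show that $G$ is uniformly complete if and only if $F^{\sigma m}$ is. For (ii)$\Rightarrow$(i), $F^{\sigma m}$ embeds in $G$ as a projection band, or at least an ideal, of $G$: the elements with zero sequence part. Uniform completeness passes to such an ideal, because for $e$ in the ideal the principal ideals $I_e$ computed in the ideal and in $G$ coincide with equal norms. For (i)$\Rightarrow$(ii), given $e\in G_+$ we dominate it by an element of product form and check that an $\|\cdot\|_e$-Cauchy sequence converges coordinatewise. The sequence coordinates converge in the Banach lattice $F$, uniformly in $n$ because of the relative uniform bound. The tail coordinates form a relatively uniformly Cauchy sequence in $F^{\sigma m}$ and converge there by (i). Finally, we verify that the limit satisfies the membership constraint and that convergence holds in $\|\cdot\|_e$.

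The main obstacle is the identification step: showing that the sublattice generated by $\mathcal{L}^r(c,F)$ is exactly this $G$, in particular that the tail components exhaust and stay within $F^{\sigma m}$ and not some larger set of order limits. We would first try to adapt Wickstead's description of the Riesz completion of $\mathcal{L}^r(c,F)$ from \cite{W:24} directly. We would then prove the two inclusions using the representation of $F^{\sigma m}$ as $u(F)-u(F)$, which is a sublattice by the computation in the Introduction.
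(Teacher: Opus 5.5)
Your proposal is correct in strategy and differs from the paper's route for (i)$\Rightarrow$(ii). The paper stays in operator language throughout. It takes Wickstead's identification of the Riesz completion with $\mathcal{H}=\{T\in\mathcal{L}^r(c,F^{\sigma m}) : T(c_0)\subset F\}$ and extends the norm of $F$ to a complete lattice norm on $F^\delta$. It then shows that $\mathcal{H}$ is $\|\cdot\|_r$-closed in the Banach lattice $\mathcal{L}^r(c,F^\delta)$, using that the uniformly complete majorizing sublattice $F^{\sigma m}$ is norm closed in $F^\delta$.

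You instead decompose. The map $T\mapsto\bigl((Te_n)_n,\tau(T)\bigr)$, with $\tau(T)=T\mathbbm{1}-\sum_n Te_n$ (order sum in $F^\delta$), carries $\mathcal{H}$ onto $\ell^1_{ob}(F)\times F^{\sigma m}$, where $\ell^1_{ob}(F)$ denotes the sequences in $F$ with order bounded partial sums of $|y_n|$. The first factor is always uniformly complete, coordinatewise, because $F$ is. So (i)$\Rightarrow$(ii) reduces to (i). For (ii)$\Rightarrow$(i), your band of elements with zero sequence part is exactly the paper's family $x\mapsto\phi(x)d$, with $\phi$ the limit functional, so that direction is the same idea phrased via ideals.

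What your route buys is a transparent structural statement and a purely order-theoretic argument: no norm on $F^\delta$ and no closedness lemma are needed. What it costs is one computation that you attribute to \cite{W:24} but must actually supply. You need that the above map is a lattice homomorphism on $\mathcal{L}^r(c,F^\delta)$, namely
\begin{itemize}
\item $|T|e_n=|Te_n|$, which holds because $e_n$ is an atom;
\item $\tau(|T|)=|\tau(T)|$, which follows from Riesz--Kantorovich.
\end{itemize}
For the second identity, the upper bound comes from $|Tz|\le\sum_n|Te_n|+|\tau(T)|$ for $|z|\le\mathbbm{1}$. The lower bound comes from $|T|\mathbbm{1}\ge\sum_{n\le N}|Te_n|+|T(\mathbbm{1}-\sum_{n\le N}e_n)|$, then letting $N\to\infty$.

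There are also a few points to fix.
\begin{itemize}
\item In your first coordinates $(Te_n,T\mathbbm{1})$ the lattice operations are not coordinatewise. For example, if $Tx=(x_1-x_2)f$ with $0<f\in F$, then $T\mathbbm{1}=0$ but $|T|\mathbbm{1}=2f$. So the $\infty$-coordinate must be the defect $\tau(T)$ from the outset. Your stated positive cone is consistent only with that choice.
\item If you prove order density yourself rather than citing Wickstead, operators depending on finitely many coordinates do not suffice: none is nonzero and lies below $x\mapsto\phi(x)d$ with $d>0$. You need $x\mapsto\phi(x)f$ with $f\in F$ and $0<f\le d$, which exists by order density of $F$ in $F^\delta$.
\item Your second fact yields, for each $d$, some element with defect $d$. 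The ideal argument, however, needs $(0,d)$ itself to lie in $G$. This is automatic once $G$ is identified with the full product via $\mathcal{H}$. Alternatively, for $u\in u(F)_+$, $(0,u)=|(y,\pm u)|\wedge(0,f)$ with $f\in F$, $f\ge u$.
\end{itemize}
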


\begin{proof}
Let $F^{\delta}$ be an order completion of $F$. In the following we will view $F,F^{\sigma m}$ as order dense and majorizing sublattices of $F^{\delta}$. In view of \cite[Lemma 3.2.1]{GK:18} we may  extend the norm of $F$ to a complete lattice norm on $F^{\delta}$. Let $T \in \mathcal{L}^r(E,F^{\delta})$, we denote by $||T||$ the operator norm of $T$ and  $||T||_r=|| \,|T|\, ||$ the regular norm of $T$. The space  $\mathcal{L}^r(E,F^{\delta})$ is a Banach lattice under the regular norm (see e.g.  \cite[Theorem 4.74]{AB:06}). By \cite[Theorem 5.5]{W:24}, the Riesz completion of $\mathcal{L}^r(c,F)$ can be identified with the sublattice $\mathcal{H}:=\{ T \in \mathcal{L}^r(c,F^{\sigma m}) \,\, | \,\, T(c_0) \subset F\}$ of $\mathcal{L}^r(c,F^{\delta})$, where $c_0$ is the space of null sequences (see also \cite[Theorem 3.2]{SX:25}). 

$(i) \Rightarrow (ii):$ We will show that $\mathcal{H}$ is a closed sublattice of $(\mathcal{L}^r(E,F^{\delta}),||\cdot||_r)$.  Indeed, let $(T_n) \subset \mathcal{H}$ such that $T_n \xrightarrow{{||\cdot||_{r}}} T \in \mathcal{L}^r(c,F^{\delta})$. Fix $x \in c$, then we have $|T_n(x)-T(x)| \leq  |T_n-T|(x) \Rightarrow ||T_n(x)-T(x)|| \leq ||T_n-T||_r\cdot||x||$, thus $T_n(x) \xrightarrow{{|| \cdot ||}} T(x)$. Since $(T_n(x)) \subset F^{\sigma m}$ and $F^{\sigma m}$ is uniformly complete we get $T(x) \in F^{\sigma m}$ (apply e.g.  \cite[Lemma~1.1]{TT:20}), thus $T \in \mathcal{L}^r(E,F^{\sigma m})$. Similarly we get $T(x) \in F$ for all $x \in c_0$. Therefore $\mathcal{H}$ is closed in $(\mathcal{L}^r(E,F^{\delta}),||\cdot||_r)$ and the claim follows.

$(ii) \Rightarrow (i):$ Suppose that $F^{\sigma m}$ is not uniformly complete, then by \cite[Theorem 9.3]{AB:06} we can find a $y_0 \in (F^{\sigma m})_+$ and a positive $||\cdot||_{y_0}$-Cauchy sequence $(f_n)$ in $I_{y_0}$ that is not $||\cdot||_{y_0}$ convergent in $I_{y_0}$. Let $\phi:c \rightarrow \mathbb{R}$ be the limit functional and $\textbf{1}\in c$ the constant one sequence. Put $T_n(x)=\phi(x)f_n$ for all $n \in \mathbb{N}$ and $T_0(x)=\phi(x) y_0$, then $\{T_n\}_{n=0}^\infty \subset \mathcal{H}$. Moreover $(T_n)$ is Cauchy in $(I_{T_0},||\cdot||_{T_0})$. Indeed, let $\epsilon>0$, then there exists $n_0 \in \mathbb{N}$ such that $|f_n-f_m| \leq \epsilon y_0$ for all $n,m \geq n_0$. Fix $x \in c_+$, then for every $z \in c$ such that $|z| \in [0,x]$ we have 
$|T_n(z)-T_m(z)|=|\phi(z)||f_n-f_m|\leq \phi(x)|f_n-f_m|$. Therefore by the Riesz-Kantorovich formula we get $|T_n-T_m|(x)=\sup\{|T_n(z)-T_m(z)| \,\,|\,\, |z| \in [0,x]\}\leq \phi(x)|f_n-f_m|$. In particular, we have $|T_n-T_m|\leq \epsilon T_0$ for all $n,m \geq n_0$. Thus $(T_n)$ is Cauchy in $(I_{T_0},||\cdot||_{T_0})$ and there exists $T \in I_{T_0}$ such that $(T_n)$ converges to $T$ with respect to $||\cdot||_{T_0}$.  Now, since $|T_n(\textbf{1})-T(\textbf{1})|=|f_n-T(\textbf{1})| \leq |T_n-T|(\textbf{1})$ for all $n \in \mathbb{N}$, and $T_0(\textbf{1})=y_0$,  it follows that $f_n \xrightarrow{||\cdot||_{y_0}} T(\textbf{1})\in  I_{y_0}$, which is a contradiction. 

\end{proof}

{\footnotesize

\end{document}